\newcommand{\fnt}[1]{{\fontfamily{cmss}\selectfont #1}}
\newtheorem{thm}{Theorem}
\begin{document}

\title{\bf Triangle Sides for Congruent Numbers less than 10,000}
\author{David Goldberg \\
  dg.paloalto@gmail.com}
\maketitle

\begin{abstract}
  We have computed a table of the triangle sides of all congruent numbers less than 10,000, which improves
  and extends the existing public table.  We give some background on properties of the triangle sides,
  and explain how we computed our table, which is available on
  \url{https://github.com/dgpaloalto/Congruent-Numbers}
\end{abstract}

\noindent
{\it Keywords:}  congruent numbers

\noindent
{\it MSC 2020:} 11Y70

\section{Introduction}
A congruent number is a positive integer that is the area of a right triangle with rational sides.
For example 5 is congruent, being the area of the triangle with sides $3/2$, $20/3$ and $41/6$.
A basic reference is the textbook \cite{koblitz}.   We know of only a single table
giving the rational sides of congruent numbers, appearing at \cite{congruum}
and \cite{numeri}.  One is likely a copy of the other, since they share
the same glitch at $N=559$, as will be explained later.%
\footnote{Also on \cite{congruum}, $n=199$  has $Q =
 27368486201$ instead of the correct value 2736848\textbf{7}6201.}
This table gives
the sides for all congruent numbers under 1000,
but there is no information about how the numbers were computed.
The purpose of this note is to introduce an extended table for congruent numbers
up to 10000, together with a detailed explanation of how they were computed.

When $N$ is congruent, there are infinitely many representations
by right triangles.   If the sides are $\alpha$, $\beta$ and $\gamma$, with
$\gamma$ the hypotenuse, we define the ``height'' of the representation as
$\max(\alpha_1, \alpha_2, \beta_1, \beta_2)$  where $\alpha = \alpha_1/\alpha_2$,
$\beta = \beta_1/\beta_2$ are the rational sides as the quotients of relatively
prime integers.   We strive to give the representation with the triangle of minimum height, 
although we do not claim to have always achieved that.  In particular, our new table
has representations with smaller heights then the existing table
for seven different congruent numbers. Figure 1 has the details.

\begin{table}
\begin{scriptsize}
\begin{tabular}{|r|r|r|r|r|r|r|r|r|r|r|} \hline
& \multicolumn{5}{|c|}{previous table} & \multicolumn{5}{|c|}{our table} \\ \hline
  $n$ &  $\alpha_1$ & $\alpha_2$ & $\beta_1$ & $\beta_2$ & height &
   $\alpha_1$ & $\alpha_2$ & $\beta_1$ & $\beta_2$ & height \\ \hline
219 & 1752& 55 & 55 &  4 & 1752 & 264 &  13 & 949 & 44 & 949 \\
323 & 12920 & 297 & 297 & 20 & 12920 & 1785 &  92 & 3496 & 105 & 3496 \\
330 & 60 &  1 & 11 &  1 &  60 & 24 &  1 & 55   & 2 &  55 \\
410 & 1640 & 9 & 9 & 2 & 1640 & 451 & 18 & 360 & 11& 451 \\ 
434 &  496 &  3 &  21 &   4 & 496 & 279 &  10 & 280 &  9 & 280 \\
609 &   20 &  1 & 609 &  10 & 609 &  28 &   3 & 261 &  2 &  261 \\
915 & 3660 &  11 & 11 &  2 & 3660 & 244 &  63 & 945 &  2 & 945 \\\hline
\end{tabular}
\caption{The seven numbers less than 1000 that have a lower height in our new table.}
\end{scriptsize}
\end{table}

As explained in \cite{koblitz}, assuming the Birch-Swinnerton-Dyer
conjecture, there is an easily computable test due to Tunnell
for whether a number is congruent.
We use that test when selecting numbers for representation by triangle sides.
Figure 1 shows a visualization of our new table.

\begin{figure}
\includegraphics[height=2.5in]{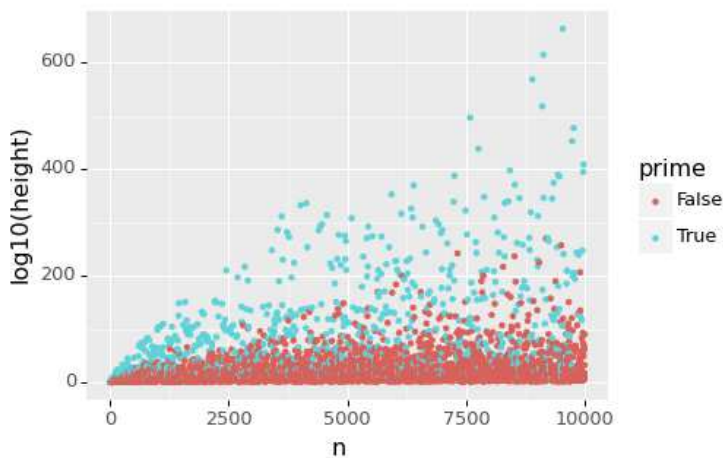}
\caption{A visualization of our table.  For each congruent number $n$, it plots
the base-10 logarithm of the height, $\max(\alpha_1, \alpha_2, \beta_1, \beta_2)$. }
\end{figure}

The table and the code that generated it
is available on GitHub as \url{https://github.com/dgpaloalto/Congruent-Numbers}.

\section{Properties of the Representation}

We might as well assume that $N$ is square free, since $N$ is congruent if and only if
its square free factor is congruent, and the sides of one are easily deduced from the other.

\begin{thm}  \label{lem:2}
  If $N$ is a square-free congruent number represented with perpendicular sides
  $\alpha_1/\alpha_2$ and $\beta_1/\beta_2$ both in reduced form,
  then $(\alpha_1, \beta_1) = 1$, $(\alpha_2, \beta_2) = 1$
  and $\alpha_1$ and $\beta_1$ have opposite parities.
\end{thm}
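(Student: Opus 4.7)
The plan is to exploit the two relations the representation provides: the area equation $\alpha_1\beta_1 = 2N\alpha_2\beta_2$ (coming from $\tfrac12\alpha\beta = N$) and the Pythagorean equation $\alpha^2+\beta^2=\gamma^2$. Clearing denominators in the latter by $(\alpha_2\beta_2)^2$ gives $(\alpha_1\beta_2)^2+(\alpha_2\beta_1)^2 = (\gamma\alpha_2\beta_2)^2$; the left side is an integer, so $C := \gamma\alpha_2\beta_2$ is an integer and I get an honest integer Pythagorean triple $A^2+B^2=C^2$ with $A=\alpha_1\beta_2$ and $B=\alpha_2\beta_1$, which I can combine with the area equation. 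The claim $(\alpha_2,\beta_2)=1$ is then quick: any common prime divisor $p$ would divide $2N\alpha_2\beta_2=\alpha_1\beta_1$, hence $\alpha_1$ or $\beta_1$, contradicting one of the reduced-form hypotheses.

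For $(\alpha_1,\beta_1)=1$, set $g=(\alpha_1,\beta_1)$ and write $\alpha_1=ga$, $\beta_1=gb$ with $(a,b)=1$. The area equation becomes $g^2 ab = 2N\alpha_2\beta_2$, and the reduced-form hypothesis forces $(g,\alpha_2)=(a,\alpha_2)=1$, so $\alpha_2\mid b$; symmetrically $\beta_2\mid a$. Writing $a=\beta_2 a'$ and $b=\alpha_2 b'$ collapses the equation to $g^2 a'b'=2N$. Squarefreeness of $N$ now forbids any odd prime factor of $g$ (which would contribute a square to $N$) and also forbids $4\mid g$ (which would force $8\mid N$), so $g\in\{1,2\}$.

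The main obstacle, and the only place where the Pythagorean relation is really needed, is ruling out $g=2$. In that case $A$ and $B$ are both even, so $C$ is too, and a brief gcd check (using $(a,b)=1$, $(a,\alpha_2)=(b,\beta_2)=1$, and $(\alpha_2,\beta_2)=1$) shows $(A/2,B/2,C/2)$ is a primitive Pythagorean triple. Exactly one of $A/2=a\beta_2$ and $B/2=b\alpha_2$ is therefore even; since $\alpha_2,\beta_2$ are both odd (being coprime to the even $\alpha_1,\beta_1$), exactly one of $a,b$ is even, say $a$. Then $v_2(\alpha_1)\ge 2$ while $v_2(\beta_1)=1$, so $v_2(\alpha_1\beta_1)\ge 3$, and the area equation forces $v_2(N)\ge 2$, contradicting squarefreeness. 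Hence $g=1$. The parity conclusion is now immediate: the area equation makes $\alpha_1\beta_1$ even, so at least one of $\alpha_1,\beta_1$ is even, while coprimality forbids both, giving opposite parities.
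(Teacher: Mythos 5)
Your proof is correct and follows essentially the same route as the paper's: the area equation $\alpha_1\beta_1=2N\alpha_2\beta_2$ together with squarefreeness handles $(\alpha_2,\beta_2)=1$ and all common odd prime factors, while the integer Pythagorean triple $(\alpha_1\beta_2)^2+(\alpha_2\beta_1)^2=C^2$ supplies the mod-4 information needed to exclude the case that $\alpha_1$ and $\beta_1$ are both even. Your packaging via $g=(\alpha_1,\beta_1)\in\{1,2\}$ and a 2-adic valuation count is a tidy reorganization, but the underlying argument (one of the two numerators would have to be divisible by 4, making the product divisible by 8 and forcing $4\mid N$) is the same as the paper's.
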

\begin{proof}
The definition of congruent number means that
\begin{equation} \label{eq:def}
     \alpha_1 \beta_1 = 2N \alpha_2\beta_2
\end{equation}
If $\alpha_2$ and $\beta_2$ have a common prime factor $p$, then
$p^2 \mid \alpha_1\beta_1$, so $p$ divides at last one of them, contradicting that the fractions
are in reduced form.  This gives
\begin{equation}
(\alpha_2, \beta_2) = 1
\end{equation}

From equation (\ref{eq:def}) and $N$ square free,
it follows that $\alpha_1$ and $\beta_1$  have no common odd prime factor.
To finish the proof, we only need to show that 
$\alpha_1$ and $\beta_1$ are of opposite parity.
First, they can't both be odd, since $2N \alpha_2 \beta_2$ is even.
And they can't both be even. If they were, then  $\alpha_2$ and $\beta_2$ are both odd.
Also, from the definition of congruent number,
\[
\frac{\alpha_1^2\beta_2^2 + \alpha_2^2\beta_1^2}{\beta_1^2\beta_2^2}
= \frac{\alpha_1^2}{\alpha_2^2} + \frac{\beta_1^2}{\beta_2^2} = \frac{\delta_1^2}{\delta_2^2} 
\]
which shows that numerator on the left is a square:
\begin{equation}\label{eq:pythag}
(\alpha_1 \beta_2)^2 + (\alpha_2 \beta_1)^2 = C^2
\end{equation}
for some integer $C$.  If both $\alpha_i = 2 \bmod 4$,
then when you divide equation (\ref{eq:pythag}) by 4, each summand on the
left hand side is congruent to 1 mod 4, so their sum
is congruent to 2 mod 4, which is not a possible residue for $C^2$.
Therefore one of the $\alpha_i$ must be divisible by 4
and their product is divisible by 8.  That means both sides
of equation (\ref{eq:def}) are
divisible by 8, forcing $N$ to be divisible by 4, contradicting that $N$
is square free.  This contradiction shows
that $\alpha_1$ and $\beta_1$ are of opposite parity.
\end{proof}

We strive to produce the representation of minimal height.  Note that
\begin{thm} \label{lem:1}
  The height $\max(\alpha_1, \alpha_2, \beta_1, \beta_2)$ is equal to 
  $\max(\alpha_1, \beta_1)$.
\end{thm}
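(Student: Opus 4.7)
The plan is to solve equation (\ref{eq:def}) explicitly for $\beta_1$ and $\beta_2$ in terms of $\alpha_1$, $\alpha_2$, and $N$, and then read off the inequalities $\alpha_2 \le \beta_1$ and $\beta_2 \le \alpha_1$, which together imply the claim. Rewriting (\ref{eq:def}) as
\[
  \frac{\beta_1}{\beta_2} \;=\; \frac{2N\alpha_2}{\alpha_1},
\]
the left-hand side is already in lowest terms by hypothesis, so it must equal the right-hand side after cancelling $g := \gcd(2N\alpha_2,\alpha_1)$. Using $(\alpha_1,\alpha_2)=1$ from Theorem \ref{lem:2}, I simplify $g = \gcd(2N,\alpha_1)$, which immediately gives $1 \le g \le 2N$.

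The explicit solutions are then $\beta_1 = 2N\alpha_2/g$ and $\beta_2 = \alpha_1/g$. The bound $g \le 2N$ yields $\beta_1 \ge \alpha_2$, while $g \ge 1$ yields $\beta_2 \le \alpha_1$. Therefore $\max(\alpha_1,\beta_1)$ dominates both $\alpha_2$ and $\beta_2$, so it equals the maximum of all four integers, as asserted.

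I do not expect any serious obstacle. The only step beyond routine arithmetic is the elementary identity $\gcd(ab,c) = \gcd(a,c)$ whenever $(b,c)=1$, which I apply with $b = \alpha_2$ and $c = \alpha_1$. It is worth noting that the argument uses only the area relation (\ref{eq:def}) together with the coprimality $(\alpha_1,\alpha_2)=1$; the Pythagorean hypothesis and the parity half of Theorem \ref{lem:2} play no role here.
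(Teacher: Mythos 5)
Your proof is correct and follows essentially the same route as the paper: both reduce the claim to the inequalities $\alpha_2 \le \beta_1$ and $\beta_2 \le \alpha_1$, which the paper obtains in one line from $\alpha_2\beta_2 \mid \alpha_1\beta_1$ together with coprimality, while you obtain them by explicitly solving $\beta_1 = 2N\alpha_2/g$ and $\beta_2 = \alpha_1/g$ with $g=\gcd(2N,\alpha_1)$. One small nit: $(\alpha_1,\alpha_2)=1$ is just the hypothesis that the fraction is in reduced form, not a conclusion of Theorem~\ref{lem:2}, which instead gives $(\alpha_1,\beta_1)=1$ and $(\alpha_2,\beta_2)=1$.
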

\begin{proof}
From equation (\ref{eq:def}), the product $(\alpha_1/\alpha_2) (\beta_1 /\beta_2)$ is an integer
and the fractions are relatively prime, so $\alpha_2 \mid \beta_1$
and therefore $\alpha_2 \leq \beta_1$.  Similarly $\beta_2 \leq \alpha_1$.
\end{proof}


\section{Representation using $P$ and $Q$}

For a fixed $N$, there is a 1-1 mapping between a pair of
rational sides and a pair of relatively prime integers $(P,Q)$. The existing tables  \cite{congruum}
and \cite{numeri}  give the representation in terms of these $P$ and $Q$.

To go from $\alpha$ and $\beta$ to $P$ and $Q$, rewrite
equation (\ref{eq:pythag}) as:
\begin{eqnarray*}
  A &=& \alpha_1\beta_2 \\
  B &=& \alpha_2\beta_1 \\
  A^2 + B^2 & = & C^2 
\end{eqnarray*}
Therefore $(A,B,C)$ can be represented as a pythagorean triple
(\cite{koblitz}, page pp. 1--2)
\begin{eqnarray*}
   A &=& P^2  - Q^2 \\
   B &=& 2PQ \\
   C &=& P^2  + Q^2
\end{eqnarray*}
It is an easy consequence of Theorem \ref{lem:2} that
$(A,B,C)$ is a primitive pythagorean triple, so the representation exists.
The above gets you from $\alpha = \alpha_1/\alpha_2$
and $\beta = \beta_1/\beta_2$ to integers $A,B,C$ and thus to $P$, $Q$.

To go from $P$ and $Q$ to $\alpha$ and $\beta$, use the
above to get $A$ and $B$ and use the following to compute $D = \alpha_2\beta_2$
and then $\alpha_1$ and $\alpha_2$.
\begin{eqnarray*}
  \frac{\alpha_1}{\alpha_2} \frac{\beta_1}{\beta_2} & = & 2N \\
  \frac{A}{\alpha_2\beta_2} \frac{B}{\alpha_2\beta_2} & = & 2N \\
  \frac{A B}{2} & = &  \alpha_2^2 \beta_2^2 N = D^2N \\
  D & = & \sqrt{\frac{AB}{2N}} \\
  \frac{\alpha_1}{\alpha_2} & = & \frac{A}{D} \\
  d & = & \gcd(A, D) \\
  \alpha_1 & = & A/d \\
  \alpha_2 & = & D/d
\end{eqnarray*}
And similarly for $\beta_1, \beta_2$.
The integers $A/d$ and $D/d$ have a ratio of $\alpha_1/\alpha_2$.  But
are they equal to the original $\alpha_1$ and $\alpha_2$?
Yes, because both pairs $(\alpha_1, \alpha_2)$ and $(A/d, D/d)$ are relatively
prime and have the same quotient.

We can now explain the glitch for $N=559$ in the tables  \cite{congruum}
and \cite{numeri}.   For that $N$ they give
$P = 2608225$ and $Q = 4489$, which are both odd.  But the $P$ and $Q$
coming from a primitive pythagorean triple always have opposite parities.

There is an even more compact representation of the
triangle sides than using $P$ and $Q$.

\begin{thm}  \label{lem:square}
  $P = P_0P_1^2$ and $Q = Q_0Q_1^2$ where $P_0Q_0 \mid N$.
\end{thm}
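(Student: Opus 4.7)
The plan is to extract the key identity
\[
PQ(P-Q)(P+Q) \;=\; D^2 N,
\]
which follows at once from $AB/2 = D^2 N$ (derived above) by substituting $A = (P-Q)(P+Q)$ and $B = 2PQ$. The strategy then is a prime-by-prime comparison: I will show that any prime whose exponent in $P$ is odd must divide $N$, and likewise for $Q$, and then combine these facts to get $P_0 Q_0 \mid N$.

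The first step is to verify that the four factors $P$, $Q$, $P-Q$, $P+Q$ on the left are pairwise coprime. Since $\gcd(P,Q) = 1$ in a primitive Pythagorean triple, any common divisor of $P$ and $P \pm Q$ must divide $Q$ and so equals $1$, and likewise with $Q$ in place of $P$. For the pair $(P-Q, P+Q)$, any common divisor divides $2\gcd(P,Q) = 2$; but the primitive triple forces $P$ and $Q$ to have opposite parities, so both $P \pm Q$ are odd and the gcd is $1$.

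With pairwise coprimality established, I take any prime $p$ with $v_p(P)$ odd. Then $p$ contributes only to the $P$-factor on the left, so the $p$-adic valuation of the left side equals $v_p(P)$, which is odd. The right side has valuation $2 v_p(D) + v_p(N)$, so matching parities forces $v_p(N)$ to be odd; since $N$ is squarefree, $v_p(N) = 1$, i.e.\ $p \mid N$. Defining $P_0$ as the squarefree part of $P$ (the product of all primes appearing to odd power in $P$) yields a factorization $P = P_0 P_1^2$ with $P_0 \mid N$. The identical argument applied to $Q$ gives $Q = Q_0 Q_1^2$ with $Q_0 \mid N$. Since $\gcd(P_0, Q_0)$ divides $\gcd(P,Q) = 1$ and $N$ is squarefree, the product $P_0 Q_0$ also divides $N$.

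The only subtle point to watch is the prime $p=2$: the valuation argument goes through for it uniformly because the opposite parity of $P$ and $Q$ keeps $P \pm Q$ odd, so $2$ divides at most one of the four factors on the left, exactly as with any odd prime. Otherwise the proof is a routine $p$-adic valuation comparison between the two sides of the key identity.
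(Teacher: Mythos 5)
Your proof is correct and follows essentially the same route as the paper: both start from the identity $PQ(P-Q)(P+Q) = N\alpha_2^2\beta_2^2$ and argue prime by prime that any prime occurring to an odd power in $P$ (or $Q$) must divide $N$. Your valuation-parity phrasing is in fact a bit more complete than the paper's ``divide out and repeat'' descent, since you explicitly verify the pairwise coprimality of the four factors (including the prime $2$ via the opposite parities of $P$ and $Q$) and supply the final step $\gcd(P_0,Q_0)=1$ needed to conclude $P_0Q_0 \mid N$ rather than merely $P_0 \mid N$ and $Q_0 \mid N$.
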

\begin{proof}
\begin{eqnarray}
  2PQ(P^2 - Q^2) &=& AB \nonumber \\
  &=&  2N \alpha_2^2 \beta_2^2 \nonumber \\
PQ(P-Q)(P+Q)  &=&  N \alpha_2^2 \beta_2^2 \label{eq:square}
\end{eqnarray}
If a prime $p$ divides $P$, then it either divides $N$
or $\alpha_2^2 \beta_2^2$.  If the latter, then $p^2$ divides
both sides of (\ref{eq:square}).  But $(P,Q) = 1$ means that
$p \nmid Q$ and so $p^2 \mid P$.  Consequently, for each divisor $p \mid P$,
either $p \mid N$ or $p^2 \mid P$.  Rewrite  (\ref{eq:square})
by dividing out all the factors of $P$, and then repeat the argument
for $Q$.
\end{proof}

As an example of the compression, for $N = 53$, instead of
$P = 1873180325$ and $Q = 1158313156$
you have $P = 53 \cdot 5945^2$ and $Q = 34034^2$.		

\section{Strategy for Generating the Table}

You can get sides $\alpha$ and $\beta$ for a congruent number $N$
by finding a rational point on the elliptic curve $y^2 = x^3 - N^2x$.
See \cite{koblitz}, Proposition 19 on page 47.  So we compute
the sides by using SageMath \cite{sagemath} to get rational points on an elliptic curve.
Details are in the next section.

In SageMath, we create an elliptic curve using
\fnt{e = EllipticCurve([-N*N, 0])}.   Then we use either
\fnt{e.gens} or \fnt{pari(e).ellheegner()} to get rational points.
The first uses Cremona’s mwrank C library, the second uses 
the Heegner point method, described in \cite{cohen}.

We need both methods because Heegner only works for points of rank one, but e.gens
is extremely slow for some $N$.  Fortunately for each $ N < 10000$, one of the
two methods will produce a rational side in a reasonable amount of elapsed computation
time. The use of Heegner points is mentioned in \cite{elkies}
for $N =  1063$.

%

We use two heuristics to try and get sides of minimal height.
For points of rank greater than one, if $g_j$ are generators, 
we get sides for the rational point $g_i$ and $g_i \pm g_j$ for $i < j$, and then pick the
set of sides of minimal height.  For the Heegner point $g$,
we try dividing $g$ by $k = 10, 9, \ldots 2$.  If one of those divisions succeeds,
we use the sides from $g/k$.   Because the group of rational points
for these curves have torsion elements, we 
actually test for divisibility of $g + t$ where $t$ ranges over the torsion subgroup.

The effectiveness of these heuristics was tested by an exhaustive
computation that computed, for each $N$,  the smallest height representation with $Q < P < 25000$.
These never had less height than the representation using our heuristics.

Here are a few numbers related to the heuristics.  For mwrank, there are 447  congruent numbers $N$
where $y^2 = x^3 - N^2x$ has  two generators, and 15 with three generators.  For Heegner points,
see table \ref{tbl:heeg}.  But note that this table only considers divisors $k \leq 10$.

\begin{table}
\begin{tabular}{|r|r|r|} \hline
 $k$ & occurrences & examples of $N$ \\ \hline
  2 & 237 & 205, 221, 438 \\
  3 & 60 & 517, 751, 1103 \\
  4 & 4 & 3805, 6198, 9118, 9143 \\
  5 & 2 & 3093, 6887 \\ \hline
\end{tabular}
\caption{Frequency of Heegner points $P$ that have a $k$-division point $Q$, that is $kQ = P$
  on the curve  $y^2 = x^3 - N^2x$. For each $k < 10$, 
  we list the number of occurrences of
  a congruent number $N$ less than 10000 that have $k$-division points, together with some sample $N$.}
\label{tbl:heeg}
\end{table}

\section{Details on Generating the Table}

If $(x,y)$ is a point on the elliptic curve $y^2 = x^3 - N^2x$, write
\begin{equation} \label{eq:st}
  x = \frac{s}{t} \qquad y = \frac{u}{v}
\end{equation}
Then $(x', y') = 2(x,y)$ is given by 
\begin{eqnarray*}
  x' & = & -2\frac{s}{t} + \left( \frac{3(\frac{s}{t})^2 - N^2}{\frac{2u}{v}} \right)^2 \\
  & = & -\frac{2s}{t} + \left(\frac{3s^2v - t^2vn^2}{2ut^2}\right)^2 \\
  & = & -\frac{2s}{t} + \frac{v^2}{4u^2t^4}\left(3s^2 - t^2n^2 \right)^2 \\
  & = & -\frac{8su^2t^3}{4u^2t^4} + \frac{v^2\left(3s^2 - t^2n^2 \right)^2}{4u^2t^4} \\
  & = & \frac{-8su^2t^3 + v^2\left(3s^2 - t^2n^2 \right)^2}{(2ut^2)^2} \\
  & = & \frac{s'}{t'} = \frac{s'}{\tau^2}
\end{eqnarray*}

Next, use Koblitz, Proposition 19 on page 47, which says that if $(x',y')$ is the sum of a point on
$y^2 = x^3 - N^2x$ with itself, then you get rational sides with area N for sides $\alpha, \beta$ with
\begin{eqnarray*}
  \alpha & = & \sqrt{x'+N} - \sqrt{x'-N} \\
  \beta & = & \sqrt{x'+N} + \sqrt{x'-N} \\
\end{eqnarray*}
so
\begin{eqnarray*}
  \alpha & = & \sqrt{x'+N} - \sqrt{x'-N} = \frac{\sqrt{s' + Nt'} - \sqrt{s' - Nt'}}{\tau} \\
  \beta & = & \sqrt{x'+N} + \sqrt{x'-N} = \frac{\sqrt{s' + Nt'} + \sqrt{s' - Nt'}}{\tau}
\end{eqnarray*}
Or since $\alpha = \alpha_1/\alpha_2$, $\beta = \beta_1/\beta_2$
\begin{eqnarray*}
  \alpha_1 &=& \sqrt{s' + Nt'} - \sqrt{s' - Nt'} \\
  \alpha_2 &=& \tau \\
  \beta_1 &=& \sqrt{s' + Nt'} + \sqrt{s' - Nt'}\\
  \beta_2 &=& d
\end{eqnarray*}

The python code using these formulas is available on GitHub as \url{https://github.com/dgpaloalto/Congruent-Numbers}.


\section{First Rows of the Table}

Here are the first nineteen lines of the table.  The columns $P_0$, $P_1$, $Q_0$, and $Q_1$
are defined in Theorem \ref{lem:square}.

\vspace{1ex}
\begin{scriptsize}
\begin{tabular}{|r|r|r|r|r|r|r|r|r|r|r|r|} \hline
$n$ & $P$ & $Q$ & $\alpha_1$ & $\alpha_2$ & $\beta_1$ & $\beta_2$ & $P_0$ & $P_1$ & $Q_0$ & $Q_1$ & weight \\ \hline 
5 & 5 & 4 & 3 & 2 & 20 & 3 & 5 & 1 & 1 & 2 & 20   \\
6 & 2 & 1 & 3 & 1 & 4 & 1 & 2 & 1 & 1 & 1 & 4   \\
7 & 16 & 9 & 35 & 12 & 24 & 5 & 1 & 4 & 1 & 3 & 35   \\
13 & 325 & 36 & 780 & 323 & 323 & 30 & 13 & 5 & 1 & 6 & 780   \\
14 & 8 & 1 & 8 & 3 & 21 & 2 & 2 & 2 & 1 & 1 & 21   \\
15 & 4 & 1 & 4 & 1 & 15 & 2 & 1 & 2 & 1 & 1 & 15   \\
21 & 4 & 3 & 7 & 2 & 12 & 1 & 1 & 2 & 3 & 1 & 12   \\
22 & 50 & 49 & 33 & 35 & 140 & 3 & 2 & 5 & 1 & 7 & 140   \\
23 & 24336 & 17689 & 80155 & 20748 & 41496 & 3485 & 1 & 156 & 1 & 133 & 80155   \\
29 & 4901 & 4900 & 99 & 910 & 52780 & 99 & 29 & 13 & 1 & 70 & 52780   \\
30 & 3 & 2 & 5 & 1 & 12 & 1 & 3 & 1 & 2 & 1 & 12   \\
31 & 1600 & 81 & 720 & 287 & 8897 & 360 & 1 & 40 & 1 & 9 & 8897  \\
34 & 9 & 8 & 17 & 6 & 24 & 1 & 1 & 3 & 2 & 2 & 24 \\
37 & 777925 & 1764 & 450660 & 777923 & 777923 & 6090 & 37 & 145 & 1 & 42 & 777923 \\
38 & 1250 & 289 & 1700 & 279 & 5301 & 425 & 2 & 25 & 1 & 17 & 5301 \\
39 & 13 & 12 & 5 & 2 & 156 & 5 & 13 & 1 & 3 & 2 & 156 \\
41 & 25 & 16 & 123 & 20 & 40 & 3 & 1 & 5 & 1 & 4 & 123 \\
46 & 72 & 49 & 253 & 42 & 168 & 11 & 2 & 6 & 1 & 7 & 253 \\
47 & 14561856 & 2289169 & 11547216 & 2097655 & 98589785 & 5773608 & 1 & 3816 & 1 & 1513 & 98589785 \\ \hline
\end{tabular}
\end{scriptsize}

\section{Acknowledgements} 
Thanks to Michael Brundage who brought to my attention the use
of Heegner points and \fnt{pari(e).ellheegner}.

\bibliographystyle{abbrv}

\bibliography{congruent.bib} 

\end{document}